\newtheorem{thm}{Theorem} 
\newtheorem{cor}[thm]{Corollary}
\newtheorem{lem}[thm]{Lemma}
\newtheorem{prop}[thm]{Proposition}
\theoremstyle{definition}
\newtheorem{definition}[thm]{Definition}
\theoremstyle{remark}
\newtheorem{rem}[thm]{Remark}
\numberwithin{equation}{section}
\begin{document}

\newcommand{\thmref}[1]{Theorem~\ref{#1}}
\newcommand{\secref}[1]{Section~\ref{#1}}
\newcommand{\lemref}[1]{Lemma~\ref{#1}}
\newcommand{\propref}[1]{Proposition~\ref{#1}}
\newcommand{\corref}[1]{Corollary~\ref{#1}}
\newcommand{\remref}[1]{Remark~\ref{#1}}
\newcommand{\eqnref}[1]{(\ref{#1})}

\newcommand{\exref}[1]{Example~\ref{#1}}

\newtheorem{innercustomthm}{{\bf Theorem}}
\newenvironment{customthm}[1]
  {\renewcommand\theinnercustomthm{#1}\innercustomthm}
  {\endinnercustomthm}
  
  \newtheorem{innercustomcor}{{\bf Corollary}}
\newenvironment{customcor}[1]
  {\renewcommand\theinnercustomcor{#1}\innercustomcor}
  {\endinnercustomthm}
  
  \newtheorem{innercustomprop}{{\bf Proposition}}
\newenvironment{customprop}[1]
  {\renewcommand\theinnercustomprop{#1}\innercustomprop}
  {\endinnercustomthm}

\newcommand{\Z}{{\mathbb Z}}
 \newcommand{\C}{{\mathbb C}}
 \newcommand{\N}{{\mathbb N}}
 \newcommand{\Q}{\mathbb{Q}}
 \newcommand{\la}{\lambda}
 \newcommand{\ep}{\epsilon}
 \newcommand{\bi}{\bibitem}
 \newcommand{\half}{\frac{1}{2}}
 \newcommand{\hgt}{\text{ht}}
 \newcommand{\mc}{\mathcal}
 \newcommand{\mf}{\mathfrak} 
 \newcommand{\hf}{\frac{1}{2}}
\newcommand{\ov}{\overline}
\newcommand{\ul}{\underline}
\newcommand{\I}{\mathbb{I}}
\newcommand{\id}{\text{id}}
\newcommand{\one}{\bold{1}}
\newcommand{\Qq}{\Q(q)}
\newcommand{\mA}{\mathcal{A}}
\newcommand{\tK}{\widetilde{K}}
\newcommand{\VV}{\mathbb{V}}
\newcommand{\WW}{\mathbb{W}}

\newcommand{\U}{\bold{U}}
\newcommand{\Udot}{\dot{\U}}
\newcommand{\B}{\bold{B}}
\newcommand{\vs}{\varsigma}

\newcommand{\ipsi}{\psi_{\imath}}
\newcommand{\Ui}{{\bold{U}^{\imath}}}
\newcommand{\UA}{{}_\mA{\bold{U}}}
\newcommand{\Uidot}{\dot{\bold{U}}^{\imath}}
\newcommand{\UAidot}{{}_\mA\dot{\bold{U}}^{\imath}}
\newcommand{\Iwhite}{\I_{\circ}}

\newcommand{\blue}[1]{{\color{blue}#1}}
\newcommand{\red}[1]{{\color{red}#1}}
\newcommand{\green}[1]{{\color{green}#1}}
\newcommand{\white}[1]{{\color{white}#1}}

\title[Canonical bases and Kazhdan-Lusztig theory]
{Canonical bases for tensor products and super Kazhdan-Lusztig theory}
 
 \author[Huanchen Bao]{Huanchen Bao}
\address{Department of Mathematics, National University of Singapore, Singapore 119076, Singapore.}
\email{huanchen@nus.edu.sg}

\author[Weiqiang Wang]{Weiqiang Wang}
\address{Department of Mathematics, University of Virginia, Charlottesville, VA 22904, United States.}
\email{ww9c@virginia.edu}

\author[Hideya Watanabe]{Hideya Watanabe}
\address{Research Institute for Mathematical Sciences, Kyoto University, Kyoto 606-8502, Japan}
\email{hideya@kurims.kyoto-u.ac.jp}

\begin{abstract}
We generalize a construction in \cite{BW18b} by showing that, for a quantum symmetric pair $(\bf U, \bf U^\imath)$ of finite type, the tensor product of a based $\bf U^\imath$-module and a based $\bf U$-module is a based $\bf U^\imath$-module. This is then used to formulate a  Kazhdan-Lusztig theory for an {\em arbitrary} parabolic BGG category $\mathcal{O}$ of the ortho-symplectic Lie superalgebras, extending a main result in \cite{BW18a}.
\end{abstract}

\maketitle

\let\thefootnote\relax\footnotetext{{\em 2010 Mathematics Subject Classification.} Primary 17B10.}


\section{Introduction}

\subsection{}

Following Lusztig \cite{Lu94}, we shall refer to $(M, \B)$, which consists of a $\U$-module $M$ and its canonical basis $\B$, as a based $\U$-module, where $\U$ is a Drinfeld-Jimbo quantum group of finite type. Examples of such based $\U$-modules include any finite-dimensional simple $\U$-module or a tensor product of several such simple $\U$-modules. The canonical basis on a tensor product of several finite-dimensional simple $\U$-modules was constructed by Lusztig \cite{Lu92}, and it has found applications to the Kazhdan-Lusztig theory for general linear Lie superalgebra $\mathfrak{gl}(m|n)$ of type A \cite{Br03, CLW15}. 

\subsection{}

As a generalization of canonical bases for quantum groups, a theory of canonical basis arising from quantum symmetric pairs (QSP, for short) $(\U, \Ui)$ of finite type is systematically developed in \cite{BW18b}. For any finite-dimensional based $\U$-module $(M, \B)$, a new bar involution $\ipsi$ on $M$ was formulated and a $\ipsi$-invariant basis $\B^\imath$ of $M$ (called an $\imath$-canonical basis for $M$) was constructed (see \cite[Theorem 5.7]{BW18b}), which satisfies some specific properties when expanded with respect to $\B$; we shall call $(M, \B^\imath)$ a based $\Ui$-module. 

The first examples of based $\Ui$-modules were constructed in \cite{BW18a} for quasi-split QSP $(\U, \Ui)$ of type AIII. The $\imath$canonical basis on a tensor product of $\U$-modules was used to formulate a Kazhdan-Lusztig theory on the full BGG category for ortho-symplectic Lie superalgebra  of type B \cite{BW18a} (for type D see \cite{Bao17}); also see \cite{CLW11} on some parabolic BGG category via a super duality approach.

\subsection{}

This paper is intended to supplement the two earlier papers \cite{BW18a, BW18b} of the first two authors on canonical bases arising from quantum symmetric pairs and applications to super Kazhdan-Lusztig theory; it extends two principal results on $\imath$-canonical basis and super Kazhdan-Lusztig theory therein to full generalities.

\subsection{}

By definition of a QSP $(\U, \Ui)$, $\Ui$ is a coideal subalgebra of $\U$ \cite{Le99}; that is, the comultiplication $\Delta$ on $\U$ when restricting to $\Ui$ satisfies $\Delta: \Ui \rightarrow \Ui \otimes \U$. Hence $M\otimes N$ is a $\Ui$-module for any $\Ui$-module $M$ and $\U$-module $N$. In the first main theorem (see Theorem \ref{thm:1}) we show that, for a based $\Ui$-module $(M, \B^\imath)$ and a based $\U$-module $(N, \B)$, there exists a $\ipsi$-invariant basis $\B^\imath \diamondsuit_{\imath} \B$ on the $\Ui$-module $M\otimes N$ such that $(M\otimes N, \B^\imath \diamondsuit_{\imath} \B)$ is a based $\Ui$-module. This generalizes a main result in \cite{BW18b} on the $\imath$-canonical basis on a tensor product of $\U$-modules, since a based $\Ui$-module which is not a $\U$-module exists (cf. \cite{BW18a}). 

The construction of the new bar involution $\ipsi$ on $M\otimes N$ above uses a certain element $\Theta^\imath$ in a completion of $\Ui\otimes \U^+$, which was due to \cite{BW18a} for quasi-split QSP of type AIII/IV and then established in Kolb \cite{Ko17} in full generality with an elegant new proof. We establish the integrality of $\Theta^\imath$ by using the integrality of the quasi-$\mathcal R$ matrix in \cite{Lu94} and the integrality of the quasi-$\mc K$ matrix in \cite{BW18b}. To construct the $\imath$-canonical basis on $M\otimes N$, we use crucially a partial order, which is different from and simpler than the old one used in \cite{BW18b} even when $M$ is a $\U$-module; the old partial order does not make much sense in our new setting. 

\subsection{}

For the quasi-split QSP $(\U, \Ui)$ of type AIII/AIV, the quantum group $\U$ is of type A; we let $\VV$ and $\WW$ denote the natural representation of $\U$ and its dual. In this case, the $\imath$-canonical basis on a based $\U$-module was first constructed in \cite{BW18a} when a certain parameter $\kappa=1$ (also see \cite{Bao17} with parameter $\kappa=0$). The super Kazhdan-Lusztig theory for the {\em full} BGG category $\mc O_{\bf b}$ of an ortho-symplectic Lie superalgebra $\mathfrak g$ of type B in \cite{BW18a} (for type D see \cite{Bao17}) of integer or half-integer weights was formulated via the $\imath$-canonical basis on a mixed tensor $\U$-module with $m$ copies of $\VV$ and $n$ copies of $\WW$, where the order of the tensor product depends on the choice of a Borel subalgebra $\bf b$ in $\mathfrak g$. 

As a consequence, a Kazhdan-Lusztig theory for parabolic categories $\mc O^{\mathfrak l}_{\bf b}$, where the Levi subalgebra $\mathfrak l$ in $\mathfrak g$ is a product of Lie algebras of type A, can be formulated and established via the $\imath$-canonical basis on a tensor product $\U$-module $\mathbb T$ of various exterior powers of $\VV$ and of $\WW$. Note however not all the parabolic categories of $\mathfrak g$-modules arise in this way; indeed a general Levi sugalgebra of $\mathfrak g$ is isomorphic to a product of several Lie subalgebras of type A and a Lie subalgebra of type B. 


Theorem \ref{thm:1}, when specialized for the QSP $(\U, \Ui)$ of quasi-split type AIII/AIV, provides an $\imath$-canonical basis for a $\Ui$-module on the tensor product of the form $\wedge^{a} \mathbb{V}_- \otimes \mathbb T$. Here $\mathbb T$ is a tensor product $\U$-module of various exterior powers of $\VV$ and of $\WW$, while $\wedge^{a} \mathbb{V}_-$ (for $a>0$) is a ``type $B$" exterior power, which is a $\Ui$-module but not a $\U$-module. These new $\imath$-canonical bases are used to formulate the super Kazhdan-Lusztig theory for an {\em arbitrary}  parabolic BGG category $\mc O$ of the ortho-symplectic Lie superalgebras of type B and D; see Theorem~\ref{thm:2}. The super Kazhdan-Lusztig polynomials $t^{\bf b}_{gf}(q)$ admit a positivity property; see Theorem~\ref{thm:positivity}. 

\subsection{}

This paper is organized as follows. Theorem~\ref{thm:1} and its proof are presented in Section~\ref{sec:based}, and we shall follow notations in \cite{BW18b} throughout Section~\ref{sec:based}. The formulation of Theorem~\ref{thm:2} is given in Section~\ref{sec:KL}; its proof basically follows the proof for the Kazhdan-Lusztig theory for the full category $\mc O$ in \cite[Part 2]{BW18a} once we have Theorem~\ref{thm:1} available to us. We shall follow notations in \cite{BW18a} throughout Section~\ref{sec:KL}. To avoid much repetition, we refer precisely and freely to the two earlier papers \cite{BW18a, BW18b}.

\vspace{.3cm}

{\bf Acknowledgements.}
The research of WW  is partially supported by an NSF grant DMS-1702254.

\section{Tensor product modules as based $\Ui$-modules}
 \label{sec:based}
 
\subsection{}

We shall follow the notations in \cite{BW18b} throughout this section.  

Let $\U$ denote a quantum group of finite type over the field $\Q(q)$ associated to a root datum of type $(\I, \cdot)$, and let $\Delta$ denote its comultiplication as in \cite{Lu94}. We denote the bar involution on $\U$ or its based module by $\psi$. 

Let $\Ui \subset \U$ be a coideal subalgebra associated to a Satake diagram such that $(\U,\Ui)$ forms a quantum symmetric pair \cite{Le99}. Let $\mathcal{A} := \Z[q,q^{-1}]$. Let $\Uidot$ be the modified version of $\Ui$  and let $\UAidot$ be its $\mathcal{A}$-form, respectively; see \cite[\S3.7]{BW18b}. Let $\ipsi$ be the bar involution on $\Ui$, $\Uidot$ and $\UAidot$. Let $X_\imath$ be the $\imath$-weight lattice  \cite[(3.3)]{BW18b}. A weight (i.e., $X_\imath$-weight) module of $\Ui$ can be naturally regarded as a $\Uidot$-module. 

We introduce based $\Ui$-modules generalizing \cite[\S27.1.2]{Lu94}. Let $\mathbf{A} = \Q[[q^{-1}]] \cap \Qq$. We write $-\otimes- = - \otimes_{\Qq} -$ whenever the base ring is  $\Qq$.

\begin{definition}\label{ad:def:1}
Let $M$ be a finite-dimensional weight $\Ui$-module over $\Qq$ with a given $\Qq$-basis $\B^\imath$. The pair $(M, \B^\imath)$ is called a based $\Ui$-module if the following conditions are satisified:
	\begin{enumerate}
		\item 	$\B^\imath \cap M_{\nu}$ is a basis of $M_{\nu}$, for  any $\nu \in X_\imath$;
		\item 	The $\mA$-submodule ${}_{\mA}M$ generated by $\B^\imath$ is stable under ${}_{\mA}\Uidot$;
		\item 	The $\Q$-linear involution $\ipsi: M \rightarrow M$ defined by 
		$\ipsi(q)= q^{-1}, \ipsi(b) = b$ for all $b \in \B^\imath$ 
		is compatible with the $\Uidot$-action, i.e., $\ipsi(um) = \ipsi(u) \ipsi(m)$, for all $u\in \Uidot, m\in M$;
		\item 	Let $L(M)$ be the $\mathbf{A}$-submodule of $M$ generated by $\B^\imath$. Then the image of $\B^\imath$ in $L(M)/ q^{-1}L(M)$ forms a $\Q$-basis in $L(M)/ q^{-1}L(M)$. 
	\end{enumerate}
\end{definition}	
We shall denote by $\mathcal L(M)$ the $\Z[q^{-1}]$-span of $\B^\imath$; then $\B^\imath$ forms a $\Z[q^{-1}]$-basis for $\mathcal L(M)$. (There are similar constructions for a based $\U$-module in similar notations.)

\subsection{}
\label{subsec:Up}

Let $\Upsilon =\sum_\mu \Upsilon_\mu$ (with $\Upsilon_0=1$ and $\Upsilon_\mu \in \U^+_\mu$) be the intertwiner (also called quasi-$\mathcal K$ matrix) of the quantum symmetric pair $(\U, \Ui)$ introduced in \cite[Theorem~ 2.10]{BW18a}; for full generality see \cite[Theorem 6.10]{BK19}, \cite[Theorem 4.8, Remark~ 4.9]{BW18b}). 
It follows from \cite[Theorem~5.7]{BW18b} (also cf. \cite[Theorem~ 4.25]{BW18a}) that a based $\U$-module $(M,\B)$ becomes a based $\Ui$-module with a new basis $\B^\imath$ (which is uni-triangular relative to $\B$) with respect to the involution $\ipsi := \Upsilon \circ \psi$.

Let $\widehat{\U \otimes \U}$ be the completion of the $\Qq$-vector space $\U \otimes \U$ 
with respect to the descending sequence of subspaces 
\[
\U \otimes \U^- \U^0 \big(\sum_{\hgt(\mu) \geq N}\U_{\mu}^+ \big) + \U^+ \U^0 \big(\sum_{\hgt(\mu) \geq N}\U_{\mu}^- \big) \otimes \U,   \text{ for }N \ge 1, \mu \in \Z\I.
\]
We have the obvious embedding of $\U \otimes \U$ into $\widehat{\U \otimes \U}$. 
By continuity the $\Q(q)$-algebra structure on $\U \otimes \U$ extends to a $\Q(q)$-algebra structure on $ \widehat{\U \otimes \U}$. We know the quasi-$\mc R$ matrix $\Theta$ lies in $\widehat{\U \otimes \U}$ by \cite[Theorem~4.1.2]{Lu94}. It follows from \cite[Theorem~4.8]{BW18b} and \cite[Theorem~6.10]{BK19} that $\Upsilon^{-1} \otimes \id$ and $\Delta(\Upsilon)$ are both in $\widehat{\U \otimes \U}$.

We define 
\begin{equation}\label{ad:eq:1}
\Theta^\imath = \Delta (\Upsilon) \cdot \Theta \cdot (\Upsilon^{-1} \otimes \id) \in \widehat{\U \otimes \U}.
\end{equation}
We can write 
\begin{equation}
  \label{eq:Thetamu}
\Theta^\imath = \sum_{\mu \in \N\I}\Theta^\imath_{\mu}, \qquad
\text{ where } \Theta^\imath_{\mu} \in  \U \otimes \U^+_\mu.
\end{equation}
The following result first appeared in \cite[Proposition~3.5]{BW18a} for the quantum symmetric pairs of (quasi-split) type AIII/AIV.
\begin{lem}\cite[Proposition~3.10]{Ko17}\label{ad:lem:1}
We have $\Theta^\imath_{\mu} \in \Ui \otimes \U^+_\mu$, for all $\mu$. (The element $\Theta^\imath_{\mu}$ is denoted by $R^\theta_\mu$ in \cite{Ko17}.)
\end{lem}

Another basic ingredient which we shall need is the integrality property of $\Theta^\imath$. 
\begin{lem}
    \label{lem:Z}
We have $\Theta^\imath_{\mu} \in \UA \otimes_{\mA} \UA^+_\mu$, for all $\mu$.
\end{lem}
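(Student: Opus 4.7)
My plan is to read the integrality of $\Theta^\imath_\mu$ off the defining identity \eqref{ad:eq:1}, by tracking the integrality of each of its three factors and then invoking \lemref{ad:lem:1} to deal with the completion issue.

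First, I would collect the three integrality inputs. By Lusztig \cite[24.1.6]{Lu94}, the quasi-$\mathcal{R}$ matrix satisfies $\Theta_\nu \in \UA^-_{-\nu} \otimes_{\mA} \UA^+_\nu$. The quasi-$\mathcal{K}$ matrix satisfies $\Upsilon_\nu \in \UA^+_\nu$ by \cite{BW18}, which is precisely the integrality result advertised in the introduction; since $\Upsilon_0 = 1$, inverting recursively in $\prod_\nu \UA^+_\nu$ gives $\Upsilon^{-1}_\nu \in \UA^+_\nu$ as well. Finally, Lusztig's coproduct satisfies $\Delta(\UA^+_\nu) \subset \bigoplus_{\alpha + \beta = \nu} \UA^+_\alpha \UA^0 \otimes_{\mA} \UA^+_\beta$, so $\Delta(\Upsilon)$ belongs to the natural $\mA$-version $\widehat{\UA \otimes \UA}$ of $\widehat{\U \otimes \U}$ obtained by repeating the completion procedure of \S\ref{subsec:Up} with $\UA$ in place of $\U$.

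Consequently, all three factors $\Delta(\Upsilon)$, $\Theta$, and $\Upsilon^{-1} \otimes \id$ lie in $\widehat{\UA \otimes \UA}$; since the $\Qq$-algebra structure on $\widehat{\U \otimes \U}$ restricts to a continuous algebra structure on $\widehat{\UA \otimes \UA}$, we conclude $\Theta^\imath \in \widehat{\UA \otimes \UA}$. Projecting onto the slice where the right tensor factor has weight $\mu$, and using that $\Upsilon^{-1} \otimes \id$ contributes only $1$ to the right slot, we find that only the finitely many decompositions $\alpha_2 + \beta = \mu$ of right-slot weights coming from $\Delta(\Upsilon_\alpha)$ and $\Theta_\beta$ can contribute; hence the right slot of $\Theta^\imath_\mu$ already lies in $\UA^+_\mu$.

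The left slot of $\Theta^\imath_\mu$ is a priori an infinite $\mA$-linear combination of elements of $\UA$, because the weights of $\Delta(\Upsilon_\alpha)$ and of $\Upsilon^{-1}$ are unbounded. This is where \lemref{ad:lem:1} is indispensable: it tells us that $\Theta^\imath_\mu$ is in fact a genuine finite element of $\Ui \otimes \U^+_\mu \subset \U \otimes \U$. Combining membership in $\widehat{\UA \otimes \UA}$ with finiteness in $\U \otimes \U$ then forces the left slot to lie in $\UA$, by a routine argument analogous to $\Z[[t]] \cap \Q[t] = \Z[t]$, yielding $\Theta^\imath_\mu \in \UA \otimes_{\mA} \UA^+_\mu$. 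The main obstacle is precisely this last step: making the intersection $\widehat{\UA \otimes \UA} \cap (\U \otimes \U^+_\mu) = \UA \otimes_{\mA} \UA^+_\mu$ rigorous by unwinding the completion filtration in terms of the triangular decomposition $\U = \U^- \U^0 \U^+$ and checking that partial truncations of an integral Cauchy sequence are themselves integral.
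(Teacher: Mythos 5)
Your proof is correct and follows essentially the same route as the paper: identify the integrality of the three factors $\Theta$ (via Lusztig \cite[24.1.6]{Lu94}), $\Upsilon$ and $\Upsilon^{-1}$ (via \cite{BW18}), and $\Delta$, and then conclude from the defining formula \eqref{ad:eq:1}. Two minor points of divergence are worth noting. First, for the integrality of $\Upsilon^{-1}$ you invert recursively in $\prod_\nu \UA^+_\nu$ using $\Upsilon_0 = 1$; the paper instead cites \cite[Corollary 4.11]{BW18}, which identifies $\Upsilon^{-1} = \psi(\Upsilon)$, so that integrality of $\Upsilon^{-1}$ is inherited at once from that of $\Upsilon$ since $\psi$ preserves the $\mA$-form. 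Both arguments are valid. Second, you make explicit a point the paper leaves implicit: after observing that each factor lies in the integral completion $\widehat{\UA \otimes \UA}$, one must still explain why the resulting element $\Theta^\imath_\mu$ is a genuine (finite) element of $\UA \otimes_\mA \UA^+_\mu$ rather than an infinite series in the left slot. Your appeal to \lemref{ad:lem:1} (equivalently, to the decomposition \eqref{eq:Thetamu}, which already records that $\Theta^\imath_\mu \in \U \otimes \U^+_\mu$) to force finiteness, combined with the integrality in the completion, is the right way to close this gap; the paper's phrase ``the lemma follows now by the definition of $\Theta^\imath$'' is compressing precisely this step. Your attention to the completion issue is a genuine, if small, improvement in rigor over the terse published argument.
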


\begin{proof}
By a result of Lusztig \cite[24.1.6]{Lu94}, we have $\Theta =\sum_{\nu \in\N\I} \Theta_\nu$ is integral, i.e., $\Theta_\nu \in \UA^-_\nu  \otimes_{\mA} \UA^+_\nu$. By \cite[Theorem~5.3]{BW18b} we have $\Upsilon=\sum_{\mu \in\N\I} \Upsilon_\mu$ is integral, i.e., $\Upsilon_\mu \in \UA^+_\mu$ for each $\mu$; it follows that $\Upsilon^{-1} = \psi(\Upsilon)$ is integral too thanks to \cite[Corollary~4.11]{BW18b}. It is well known that the comultiplication $\Delta$ preserves the $\mA$-form, i.e., $\Delta (\UA) \subset \UA  \otimes_{\mA} \UA$. The lemma follows now by the definition of $\Theta^\imath$ in \eqref{ad:eq:1}. 
\end{proof}

\subsection{}

Define a partial order $<$ on $X$ by setting $\mu'<\mu$ if $\mu' - \mu \in \N \I$. Denote by $|b | =\mu$ if an element $b$ in a $\U$-module is of weight $\mu$. 
Now we are ready to prove the first main result of this paper. 
\begin{thm}\label{thm:1}
Let $(M, \B^\imath)$ be a based $\Ui$-module and $(N, \B)$ be a based $\U$-module.  
\begin{enumerate}
\item
For $b_1 \in \B^\imath, b_2\in \B$, there exists a unique element $b_1\diamondsuit_\imath b_2$ which is $\ipsi$-invariant such that $b_1\diamondsuit_\imath b_2\in b_1\otimes b_2 +q^{-1}\Z[q^{-1}] \B^\imath \otimes \B$.
\item
We have $b_1\diamondsuit_\imath b_2 \in b_1\otimes b_2 +\sum\limits_{(b'_1,b'_2) \in \B^\imath \times \B, |b_2'| < |b_2|} q^{-1}\Z[q^{-1}] \, b_1' \otimes b_2'$.
\item
$\B^\imath \diamondsuit_\imath \B :=\{b_1\diamondsuit_\imath b_2 \mid b_1 \in \B^\imath, b_2\in \B \}$ forms a $\Qq$-basis for  $M \otimes N$, an $\mA$-basis for ${}_\mA M  \otimes_{\mA} {}_\mA N$, and a $\Z[q^{-1}]$-basis for $\mathcal L(M)  \otimes_{\Z[q^{-1}]} \mathcal L(N)$. (This is called {\em the $\imath$-canonical basis} for $M\otimes N$.)
\item
$(M\otimes N, \B^\imath \diamondsuit_\imath \B)$ is a based $\Ui$-module.
\end{enumerate}
\end{thm}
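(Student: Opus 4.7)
The plan is to imitate Lusztig's tensor product construction \cite[\S27.3]{Lu94}, with the quasi-$\mathcal R$ matrix $\Theta$ replaced by $\Theta^\imath$. The first step is to define a $\Q$-linear map $\ipsi \colon M \otimes N \to M \otimes N$ by
\[
\ipsi(m \otimes n) := \Theta^\imath \cdot (\ipsi(m) \otimes \psi(n)),
\]
which is well-defined because, on each weight vector, only finitely many summands $\Theta^\imath_\mu$ from \eqref{eq:Thetamu} contribute (the $\U^+_\mu$-factor annihilates a weight vector of the finite-dimensional $N$ once $\mu$ is large enough). The next task is to verify (i) that $\ipsi$ squares to the identity, which reduces to the identity $\Theta^\imath \cdot (\ipsi \otimes \psi)(\Theta^\imath) = 1 \otimes 1$ in $\widehat{\U \otimes \U}$, and (ii) that $\ipsi(u \cdot x) = \ipsi(u) \cdot \ipsi(x)$ for $u \in \Uidot$ and $x \in M \otimes N$, which reduces to an intertwining identity of the form $\Theta^\imath \cdot (\ipsi \otimes \psi)\Delta(u) = \Delta(\ipsi(u)) \cdot \Theta^\imath$. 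Both identities I would derive by unpacking the factorisation \eqref{ad:eq:1} and combining the well-known analogous properties of $\Theta$ with the intertwining relation defining $\ipsi$ on $\Ui$ and the bar-inversion $\psi(\Upsilon) = \Upsilon^{-1}$ available from \cite[Corollary~4.11]{BW18}.

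With the bar $\ipsi$ on $M \otimes N$ in hand, parts (1) and (2) follow by the Kashiwara--Lusztig triangular argument. The key structural input is that $\Theta^\imath_0 = 1 \otimes 1$ produces the leading term $b_1 \otimes b_2$, while each $\Theta^\imath_\mu$ with $\mu \ne 0$ lies in $\Ui \otimes \U^+_\mu$ by \lemref{ad:lem:1}. Expanding $\Theta^\imath_\mu \cdot (\ipsi(b_1) \otimes \psi(b_2))$ in the basis $\B^\imath \otimes \B$ (using that $\ipsi$ on $M$ and $\psi$ on $N$ are unitriangular with respect to $\B^\imath$ and $\B$ respectively, and that $\psi$ preserves $X$-weights), the $\U^+_\mu$-factor produces only terms $b_1' \otimes b_2'$ with $|b_2'| = |b_2| + \mu$, that is, $|b_2'| < |b_2|$ strictly in the partial order of the statement. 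Ordering $\B^\imath \otimes \B$ so that the weight of the second tensor factor is monotone, the matrix of $\ipsi$ becomes upper unitriangular, and Lusztig's recursive construction produces the unique $\ipsi$-invariant element $b_1 \diamondsuit_\imath b_2$ of the required shape; integrality of the off-diagonal coefficients in $q^{-1}\Z[q^{-1}]$ is forced by \lemref{lem:Z}.

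Part (3) then becomes formal: the transition matrix from $\B^\imath \otimes \B$ to $\B^\imath \diamondsuit_\imath \B$ is unipotent with off-diagonal entries in $q^{-1}\Z[q^{-1}]$, and therefore invertible simultaneously over $\Z[q^{-1}]$, over $\mA$, and over $\Qq$; since $\B^\imath \otimes \B$ is already a simultaneous basis of the three modules in question by the based-module hypotheses on $M$ and $N$, so is $\B^\imath \diamondsuit_\imath \B$. For part (4) the four axioms of Definition~\ref{ad:def:1} are verified as follows: weight-compatibility holds because $\Theta^\imath_\mu$ has matching $\Ui$- and $\U$-weight shifts that cancel on the $X_\imath$-grading of $M \otimes N$, so each $b_1 \diamondsuit_\imath b_2$ is a weight vector of the same $X_\imath$-weight as $b_1 \otimes b_2$; $\ipsi$-compatibility is built into the construction; the image of $\B^\imath \diamondsuit_\imath \B$ in $L(M \otimes N)/q^{-1} L(M \otimes N)$ is a $\Q$-basis by (2); and the $\UAidot$-stability of ${}_\mA M \otimes_\mA {}_\mA N$ follows from the coideal property $\Delta(\UAidot) \subseteq \UAidot \otimes_\mA \UA$ combined with the based-module axioms on $M$ and $N$.

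The hardest step is the first: establishing both the involution identity $\Theta^\imath \cdot (\ipsi \otimes \psi)(\Theta^\imath) = 1 \otimes 1$ and the $\Uidot$-intertwining of $\ipsi$ on $M \otimes N$. These require careful bookkeeping in the completion $\widehat{\U \otimes \U}$, where one must manipulate the factorisation \eqref{ad:eq:1} of $\Theta^\imath$ against the bar and intertwining properties of $\Upsilon$ and of $\Theta$. Once those are secured, the rest is a faithful adaptation of Lusztig's original construction, with the crucial refinement that triangularity is now controlled by the weight of the second tensor factor alone (rather than by the combined weight used in \cite{BW18}); this refinement is exactly what the shape $\Theta^\imath_\mu \in \Ui \otimes \U^+_\mu$, together with the leading term $\Theta^\imath_0 = 1 \otimes 1$, is designed to enable.
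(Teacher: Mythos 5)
Your proposal follows essentially the same approach as the paper: define $\ipsi$ on $M \otimes N$ via $\Theta^\imath \circ (\ipsi \otimes \psi)$, observe the leading term $\Theta^\imath_0 = 1\otimes 1$, use $\Theta^\imath_\mu \in \Ui \otimes \U^+_\mu$ (Lemma~\ref{ad:lem:1}) to get triangularity measured by the weight of the second tensor factor alone, invoke integrality of $\Theta^\imath$ (Lemma~\ref{lem:Z}), and apply Lusztig's recursive construction \cite[Lemma~24.2.1]{Lu94}. The only noteworthy difference is that you propose to re-derive the involution identity $\ipsi^2 = \id$ and the $\Uidot$-intertwining property of $\ipsi$ from scratch, whereas the paper takes these as established in \cite[\S 3]{BW13} (where $\Theta^\imath$ and its compatibility with the coideal structure were proved); also, your appeal to ``unitriangularity of $\ipsi$ on $M$ and $\psi$ on $N$'' is an over-complication, since by Definition~\ref{ad:def:1}(3) both involutions fix the bases $\B^\imath$ and $\B$ pointwise.
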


\begin{proof}
It follows by Lemma~\ref{ad:lem:1} that the element $\Theta^\imath$ gives rise to a well-defined operator on the tensor product $M \otimes N$. Following \cite[(3.17)]{BW18a}, we define a new bar involution on $M\otimes N$ (still denoted by $\ipsi$) by letting 
\[
\ipsi := \Theta^\imath \circ (\ipsi \otimes \psi) : M \otimes N \longrightarrow M \otimes N.
\]
Recall from \cite{Lu94} that $\Delta(E_i)  = E_i \otimes 1 + \widetilde{K}_i \otimes E_i$. If follows that 
\[
\Delta (\Upsilon) \in \Upsilon \otimes 1 + \sum_{0\neq \mu\in \N\I} \U\otimes \U^+_\mu.
\]
Recalling \eqref{eq:Thetamu}, we have
\begin{equation}\label{ad:eq:theta}
\Theta^\imath_0 = (\Upsilon \otimes 1) \cdot (1 \otimes 1) \cdot  (\Upsilon^{-1} \otimes 1)=1 \otimes 1.
\end{equation}

Let $b_1 \in \B^\imath$ and  $b_2 \in \B$. By \eqref{ad:eq:theta} and Lemma~\ref{lem:Z}, we have  
\begin{equation}
  \label{ad:eq:order}
\ipsi (b_1 \otimes b_2) \in  b_1 \otimes b_2 + \sum_{\substack{(b'_1,b'_2) \in \B^\imath \times \B \\ |b_2'| < |b_2|}} \mA \, b'_1 \otimes b'_2. 
\end{equation}
Applying \cite[Lemma~24.2.1]{Lu94}, there exists a $\ipsi$-invariant element $(b_1 \otimes b_2)^\imath \in M \otimes N$ such that 
\[
	b_1 \diamondsuit_\imath b_2  \in b_1 \otimes b_2 + \sum_{\substack{(b'_1,b'_2) \in \B^\imath \times \B \\ |b_2'| < |b_2|}} q^{-1} \Z[q^{-1}]\, b'_1 \otimes b'_2.  
\]
This proves (2), and Part (3) follows immediately. 

A by now standard argument shows the uniqueness of $b_1 \diamondsuit_\imath b_2$ as stated in (1); note a weaker condition than (2) is used in (1).

It remains to see that $(M, \B^\imath \diamondsuit_\imath \B)$ is a based $\Ui$-module. The item (3) in the definition of a based $\Ui$-module is proved in the same way as for \cite[Proposition~ 3.13]{BW18a}, while the remaining items are clear. 

This completes the proof. 
\end{proof}

\begin{rem}  
\begin{enumerate}
\item
An elementary but key new ingredient in Theorem~\ref{thm:1} above is the use of a (coarser) partial order $<$, which is different from the partial order $<_{\imath}$ used in \cite[(5.2)]{BW18b}. 
\item
Assume that $M$ is a based $\U$-module. Then the $\imath$-canonical basis for $M\otimes N$  in Theorem~\ref{thm:1} coincides with the one in \cite[Theorem 5.7]{BW18b},  thanks to the uniqueness in Theorem~\ref{thm:1}(1). 
\end{enumerate}
\end{rem}

\begin{rem}
 \label{rem:general}
Theorem~\ref{thm:1} would be valid whenever we can establish the (weaker) integrality of $\Theta^\imath$ acting on $M\otimes N$. This might occur when we consider more general parameters for $\Ui$ than \cite{BW18b} or when we consider quantum symmetric pairs of Kac-Moody type in a forthcoming work of the first two authors. 
\end{rem}


\section{Applications to super Kazhdan-Lusztig theory}
 \label{sec:KL}

\subsection{}
In this section, we shall apply Theorem~\ref{thm:1} to formulate and establish the (super) Kazhdan-Lusztig theory for an arbitrary parabolic category $\mc{O}$ of modules of integer or half-integer weights for ortho-symplectic Lie superalgebras, generalizing \cite[Part ~2]{BW18a} (also see \cite{Bao17}). We shall present only the details on an arbitrary parabolic category $\mc{O}$ consisting of modules of integer weights for Lie superalgebra $\mathfrak{osp}(2m+1\vert 2n)$.

\subsection{}

All relevant notations throughout this section shall be consistent with \cite[Part 2]{BW18a}. In particular, we use a comultiplication for $\U$ different from \cite{Lu94}; this leads to a version of the intertwiner $\Upsilon =\sum_\mu \Upsilon_\mu$ with $\Upsilon_\mu \in \widehat{\U}^-$ (compare with \S\ref{subsec:Up}), and a version of Theorem~\ref{thm:1} in which the opposite partial order and the lattice $\Z[q]$ 
 are used. To further match notations with \cite{BW18a} in this section, we denote the $\mA$-form of any based $\U$ or $\Ui$-modules $M$, as $M_{\mA}$ (instead of ${}_\mA M$).

We consider the infinite-rank quantum symmetric pair $(\U, \Ui)$ as defined in \cite[Section~8]{BW18a} (where the parameter is chosen to be $\kappa=1$ in the notation of \cite{BW18b}). It is a direct limit of quantum symmetric pairs of type AIII, $(\U(\mathfrak{sl}_N), \Ui(\mathfrak{sl}_N) )$, for $N$ even. 
We denote by $\mathbb{V}$ the natural representation of $\U$, and by $\mathbb{W}$ the restricted dual of $\mathbb{V}$. 

Associated to any given $0^m1^n$-sequence ${\bf b} = (b_1,\ldots,b_{m+n})$ starting with $0$, we have a fundamental system of $\mathfrak{osp}(2m+1\vert 2n)$, denoted by $\Pi_{\bf{b}} = \{ -\epsilon_1^{b_1}, \epsilon_i^{b_i} - \epsilon_{i+1}^{b_{i+1}} \mid 1 \leq i \leq m+n-1 \}$; here $\epsilon_i^0 = \epsilon_x$ for some $1 \leq x \leq m$ and $\epsilon_j^1 = \epsilon_{\bar{y}}$ for some $1 \leq y \leq n$ so that $\{\epsilon_i^{b_i} \mid 1\le i \le m+n\}$ form a permutation of $\{\epsilon_a, \epsilon_{b} \mid 1\le a \le m, 1\le b \le n\}$.


\subsection{}

Let $W_{B_{s}}$ and $W_{A_{s-1}}$ be the Weyl group of type $B_{s}$ and type $A_{s-1}$ with unit $e$, respectively. We denote their corresponding Hecke algebras by $\mc{H}_{B_{s}}$ and $\mc{H}_{A_{s-1}}$, with Kazhdan-Lusztig bases by $\{\underline{H_{w}} \vert w \in W_{B_s}\}$ and $\{\underline{H_{w}} \vert w \in W_{A_{s-1}}\}$, respectively. Both algebras act naturally on the right on $\mathbb{V}^{\otimes s}$ and $\mathbb{W}^{\otimes s}$; cf. \cite[Section 5]{BW18a}. We define
\begin{align*}
\wedge^{s} \mathbb{V}_- &= \mathbb{V}^{\otimes s} \Big / \sum_{e \neq w \in W_{B_{s}}}\mathbb{V}^{\otimes s} \cdot \underline{H_w},\\
 \wedge^{s} \mathbb{V}&=  \mathbb{V}^{\otimes s} \Big / \sum_{e \neq w \in W_{A_{s-1}}} \mathbb{V}^{\otimes s} \cdot \underline{H_w}.
\end{align*}

We similarly define $\wedge^{s} \mathbb{W}_-$ and $\wedge^{s} \mathbb{W}$. Note $\wedge^{s} \mathbb{V}_-, \wedge^{s} \mathbb{V}, \wedge^{s} \mathbb{W}_-$ and $\wedge^{s} \mathbb{W}$ are all based $\Ui$-modules by  \cite[Theorem~5.8]{BW18a}. We shall denote
$$
\mathbb{V}^{c} := \begin{cases}
\mathbb{V} \quad & \text{ if } c = 0, \\
\mathbb{W} \quad & \text{ if } c = 1.
\end{cases}
$$

The following corollary is a direct consequence of Theorem~\ref{thm:1}. 

\begin{cor}\label{cor:1}
Let $c_1,\ldots,c_k \in \{ 0,1 \}$ and $a_0, a_1,\ldots,a_k \in \N$. Then (a suitable completion of) the tensor product    
\[
\mathbb{T}^{\bf{b}, \mathfrak{l}} = \wedge^{a_0} \mathbb{V}_- \otimes \wedge^{a_1} \mathbb{V}^{c_1} \otimes \cdots \otimes \wedge^{a_k} \mathbb{V}^{c_k}
\]
is a based $\Ui$-module. 
\end{cor}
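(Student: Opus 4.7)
The plan is to apply Theorem~\ref{thm:1} iteratively from left to right. The leftmost factor $\wedge^{a_0}\mathbb{V}_-$ is a based $\Ui$-module by [BW13, Theorem~5.8], while each subsequent factor $\wedge^{a_i}\mathbb{V}^{c_i}$ ($i\geq 1$) is a based $\U$-module: it is the quotient of $\mathbb{V}^{\otimes a_i}$ or $\mathbb{W}^{\otimes a_i}$ by the kernel of the type $A$ Hecke symmetrizer, whose action commutes with $\U$, so the canonical based $\U$-module structure descends from the tensor power. Note the distinction: only $\wedge^{a_0}\mathbb{V}_-$ carries the "$-$" subscript and is purely a $\Ui$-module, because the defining relation for $\wedge^{a_0}\mathbb{V}_-$ involves the type $B$ Hecke algebra whose action uses the coideal $\Ui$ rather than $\U$.

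Setting $M_0 := \wedge^{a_0}\mathbb{V}_-$ and inductively $M_i := M_{i-1} \otimes \wedge^{a_i}\mathbb{V}^{c_i}$, each $M_i$ acquires a based $\Ui$-module structure with $\imath$-canonical basis $\B^\imath_{M_{i-1}} \diamondsuit_\imath \B_{\wedge^{a_i}\mathbb{V}^{c_i}}$ by a direct application of Theorem~\ref{thm:1}. The resulting $\mathbb{T}^{\bf{b},\mathfrak{l}} = M_k$ is then a based $\Ui$-module, which proves the corollary.

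The main obstacle, signalled by the parenthetical "(a suitable completion of)" in the statement, is that $\mathbb{V}$ and $\mathbb{W}$ are infinite-dimensional (as direct limits of natural representations of $\U(\mathfrak{sl}_N)$), whereas Theorem~\ref{thm:1} is stated for finite-dimensional weight modules. I would handle this by replacing the ordinary tensor product at each inductive step with a topological completion analogous to $\widehat{\U \otimes \U}$ in \S\ref{subsec:Up}, on which the action of $\Theta^\imath$ and hence the $\imath$-bar involution $\Theta^\imath \circ (\ipsi \otimes \psi)$ are well-defined: only finitely many components $\Theta^\imath_\mu$ act non-trivially on any fixed weight vector, so convergence is automatic on each weight space. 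Once this set-up is in place, the uni-triangular construction of $\B^\imath_{M_{i-1}} \diamondsuit_\imath \B_{\wedge^{a_i}\mathbb{V}^{c_i}}$ via [Lu94, Lemma~24.2.1] carries over verbatim, because \eqref{ad:eq:order} is a finite-support statement at each weight. The analogous completion was treated in detail in [BW13, Part~2] for the mixed tensor $\mathbb{V}^{\otimes m} \otimes \mathbb{W}^{\otimes n}$, and its adaptation to the exterior-power setting requires only cosmetic changes, which is why the corollary can be asserted as a "direct consequence" of Theorem~\ref{thm:1}.
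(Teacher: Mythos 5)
Your overall strategy matches the paper's: the corollary is asserted there, with no further proof, to be ``a direct consequence of Theorem~\ref{thm:1}'' with the completion handled as ``a straightforward generalization of the $B$-completion studied in \cite[Section~9]{BW13}''. Your decomposition into the $\Ui$-module $\wedge^{a_0}\mathbb{V}_-$ and the based $\U$-module factors is exactly right, and both your iterated application and the paper's implicit single application (take $N = \wedge^{a_1}\mathbb{V}^{c_1}\otimes\cdots\otimes\wedge^{a_k}\mathbb{V}^{c_k}$, which is a based $\U$-module by Lusztig's tensor product construction) produce the same basis by the uniqueness in Theorem~\ref{thm:1}(1).

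One cautionary note on the completion discussion. You write that ``only finitely many components $\Theta^\imath_\mu$ act non-trivially on any fixed weight vector, so convergence is automatic on each weight space'' and that the Lusztig recursion ``carries over verbatim.'' This is true for $\ipsi$ applied to a single standard basis vector, but it understates what the completion is for: in the infinite-rank direct limit the weight spaces of $\mathbb{T}^{\bf b,\mathfrak{l}}$ can be infinite-dimensional, and the set of standard basis indices $<$ a given one in the partial order is typically infinite, so the recursion of \cite[Lemma~24.2.1]{Lu94} need not terminate and a priori produces only an element of the $B$-completion $\widehat{\mathbb{T}}^{\bf b,\mathfrak{l}}$. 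That the $\imath$-canonical basis elements are in fact finite sums (hence lie in $\mathbb{T}^{\bf b,\mathfrak{l}}$ itself) is a separate, non-trivial finiteness/positivity statement, recorded in Theorem~\ref{thm:positivity}(2) and explicitly invoked right after the corollary. Since you ultimately defer to \cite[Part 2]{BW13} for the completion machinery the argument does go through, but the phrase ``convergence is automatic'' would mislead a reader about where the content lies.
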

The completion above arises since we deal with quantum symmetric pairs of infinite rank, and it is a straightforward generalization of the $B$-completion studied in \cite[Section~9]{BW18a}. Note that the $\imath$-canonical basis lives in $\mathbb{T}^{\bf{b}, \mathfrak{l}}$ (instead of its completion) by Theorem~\ref{thm:positivity} below.

\subsection{}

Associated to the fundamental system $\Pi_{\bf{b}}$ are the set of positive roots $\Phi^+_{\mathbf{b}}$ and the Borel subalgebra $\mathfrak b_{\bf b}$ of $\mathfrak{osp}(2m+1 \vert 2n)$. 
Let $\Pi_{\mathfrak{l}} \subset \Pi_{\bf{b}}$ be a subset of even simple roots. We introduce the corresponding Levi subalgebra $\mathfrak l$ and parabolic subalgebra $\mathfrak p$ of $\mathfrak{osp}(2m+1 \vert 2n)$:
\begin{align*}
\mathfrak{l} = \mathfrak h_{m|n} \bigoplus \bigoplus_{\alpha \in \mathbb{Z}\Pi_{\mathfrak{l}} \cap \Phi_{\mathbf{b}}} \mathfrak{osp}(2m+1\vert 2n)_{\alpha},
\qquad\quad 
\mathfrak{p} = \mathfrak{l} + \mathfrak b_{\bf b}.
\end{align*}
Recall \cite[\S7]{BW18a} the weight lattice $X(m|n) = \sum^{m}_{i=1}\Z\epsilon_{i} + \sum^n_{j=1}\Z\epsilon_{\ov j}.$
We denote
\[
X_{\mathbf{b}}^{\mathfrak l,+} = \{\lambda \in X(m|n) \mid (\lambda \vert \alpha) \ge 0, \forall \alpha \in  \Pi_{\mathfrak{l}}\}.
\] 
Let $L_0(\lambda)$ be the irreducible $\mathfrak{l}$-module with highest weight $\lambda$, which is extended trivially to a $\mathfrak{p}$-module. We form the parabolic Verma module
\[
M^{\mathfrak{l}}_{{\bf b}}(\lambda) :=\text{Ind}^{\mathfrak{osp}(2m+1|2n)}_{\mathfrak{p}}L_0(\lambda).
\]

\begin{definition}
Let $\mc{O}^{\mathfrak{l}}_{\mathbf{b}}$ be the category of $\mathfrak{osp}(2m+1\vert 2n)$-modules $M$ such that 
\begin{itemize}
\item[(i)]
$M$ admits a weight space decomposition $M=\bigoplus\limits_{\mu \in X(m|n)}M_\mu$, and $\dim M_\mu<\infty$;

\item[(ii)]
M decomposes over $\mathfrak{l}$ into a direct sum of $L_\mathfrak{l}(\lambda)$ for some $\lambda \in X_{\mathbf{b}}^{\mathfrak l,+}$;
\item[(iii)]
there exist finitely many weights ${}^1\la,{}^2\la,\ldots,{}^k\la\in X_{\mathbf{b}}^{\mathfrak l,+}$
(depending on $M$) such that if $\mu$ is a weight in $M$, then
$\mu\in{{}^i\la}-\sum_{\alpha\in{\Pi_{{\bf b}}}}\N\alpha$, for
some $i$.
\end{itemize}
The morphisms in $\mc{O}^{\mathfrak{l}}_{\mathbf{b}}$ are all (not necessarily even)
homomorphisms of $\mathfrak{osp}(2m+1|2n)$-modules.
\end{definition}

For $\lambda \in X_{\mathbf{b}}^{\mathfrak l,+}$, we shall denote by $L^{\mathfrak{l}}_{{\bf b}}(\lambda)$ the simple quotient of the parabolic Verma module $M^{\mathfrak{l}}_{{\bf b}}(\lambda)$ in $\mc{O}^{\mathfrak{l}}_{\mathbf{b}}$ with highest weight $\lambda$. Following \cite[Definition~7.4]{BW18a}, we can define the tilting modules $T^{\mathfrak{l}}_{{\bf b}}(\lambda)$ in $\mc{O}^{\mathfrak{l}}_{\mathbf{b}}$, for $\lambda \in X_{\mathbf{b}}^{\mathfrak l,+}$. We denote by $\mc{O}^{\mathfrak{l}, \Delta}_{\mathbf{b}}$ the full subcategory of $\mc{O}^{\mathfrak{l}}_{\mathbf{b}}$ generated by all modules possessing finite parabolic Verma flags. 

\subsection{}

Recall the bijection $X(m \vert n) \leftrightarrow I^{m+n}$ \cite[\S8.4]{BW18a}, where an element $f \in I^{m+n}$ is understood as a $\rho$-shifted weight.  We consider the restriction $X_{\mathbf{b}}^{\mathfrak l,+} \leftrightarrow I_{\mathfrak{l}, +}^{m+n}$, where the index set $I_{\mathfrak{l}, +}^{m+n}$ is defined as the image under the bijection. 

Let $W_{\mathfrak{l}}$ be the Weyl group of $\mathfrak{l}$ with the corresponding Hecke algebra $\mc{H}_{\mathfrak{l}}$. Recall that $\Pi_{\mathfrak{l}} \subset \Pi_{\bf{b}}$ is a subset of even simple roots. Hence
we have the natural right action of $\mc{H}_\mathfrak{l}$ on the $\mA$-module $\mathbb{T}_{\mA}^{\bf{b}} := \mathbb{V}_{\mathcal{A}}^{b_1} \otimes_{\mA} \cdots \otimes_{\mA}  \mathbb{V}_{\mathcal{A}}^{b_{m+n}}$ with a standard basis $M^{\bf{b}}_{f} \in \mathbb{T}^{\bf{b}}_{\mA}$, for $f \in I_{\mathfrak{l}}^{m+n}$;cf. \cite[\S8.2]{BW18a}.
We define 
\[
 \mathbb{T}^{\bf{b}, \mathfrak{l}}_{\mA} = \mathbb{T}^{\bf{b}}_{\mA} \Big / \sum_{e \neq w \in W_{\mathfrak{l}}} \mathbb{T}_{\mA}^{\bf{b}} \cdot \underline{H_w}.
\]
The quotient space is an $\mA$-form $\mathbb{T}^{\bf{b}, \mathfrak{l}}_{\mA}$ of the $\Qq$-space $\mathbb{T}^{\bf{b}, \mathfrak{l}}$ appearing in Corollary~\ref{cor:1}:
\begin{equation}
  \label{eq:T-}
 \mathbb{T}^{\bf{b}, \mathfrak{l}}_{\mA} =
 \wedge^{a_0} \mathbb{V}_{-,\mA} \otimes_{\mA}  \wedge^{a_1} \mathbb{V}_\mA^{c_1} \otimes_{\mA}  \cdots \otimes_{\mA}  \wedge^{a_k} \mathbb{V}_\mA^{c_k}, \qquad \text{ for } c_i \in \{0, 1\}, \quad a_i \in \mathbb{N},
\end{equation}
where $c_i$ and $a_i$ are determined as follows. Let $W'$ denote a subgroup of the Weyl group of $\mathfrak{osp}(2m+1 \vert 2n)$,  $W' = W_{B_{m}} \times S_{n} = \langle s_0,s_1,\ldots,s_{m-1}, s_{m+1}, \ldots, s_{m+n-1} \rangle$, where $s_i =s_{\alpha_i}$, and $\alpha_0 = -\epsilon_1^{b_0}$, $\alpha_i = \epsilon_{i}^{b_i} - \epsilon_{i+1}^{b_{i+1}}$ for $1 \leq i \leq m+n-1$. Then, $W_{\mathfrak{l}}$ is the parabolic subgroup of $W'$ generated by $\{ s_i \mid \alpha_i \in \Pi_{\mathfrak{l}} \}$. Let us write $\{ 0,1,\ldots,m+n \} \setminus \{ i \mid \alpha_i \in \Pi_{\mathfrak{l}} \} = \{ j_1 < j_2 <\cdots < j_{k+1} \}$. Then, $a_i = j_{i+1} - j_{i}$ and $c_{i+1} = b_{j_i}$, where it is understood that $j_0 = 0$.  

For any standard basis element $M^{\bf{b}}_{f} \in \mathbb{T}^{\bf{b}}_{\mA}$ with $f \in I_{\mathfrak{l}, +}^{m+n}$, we denote by $M^{\bf{b}, \mathfrak{l}}_{f}$ its image in $\mathbb{T}^{\bf{b}, \mathfrak{l}}_{\mA}$. Then $\{M^{\bf{b}, \mathfrak{l}}_{f} \vert f \in I_{\mathfrak{l}, +}^{m+n}  \}$ forms an $\mA$-basis of  $\mathbb{T}^{\bf{b}, \mathfrak{l}}_{\mA}$. Let 
\[
\mathbb{T}^{\bf{b}, \mathfrak{l}}_{\Z}  =  \mathbb{T}^{\bf{b}, \mathfrak{l}}_{\mA} \otimes_{\mA} \Z 
\]
be the specialization of $\mathbb{T}^{\bf{b}, \mathfrak{l}}_{\mA}$ at $q=1$.
Let $\widehat{\mathbb{T}}^{\bf{b}, \mathfrak{l}}_{\Z}$ be the $B$-completion of $\mathbb{T}^{\bf{b}, \mathfrak{l}}_{\Z}$ following \cite[Section~9]{BW18a}.
It follows from Corollary~\ref{cor:1} the space $\widehat{\mathbb{T}}^{\bf{b}, \mathfrak{l}}_{\Z}$ admits the $\imath$-canonical basis $\{T^{\bf{b}, \mathfrak{l}}_f\vert f \in I_{\mathfrak{l}, +}^{m+n}\}$. We can similarly define the dual $\imath$-canonical basis $\{L^{\bf{b}, \mathfrak{l}}_f \vert f \in I_{\mathfrak{l}, +}^{m+n}\}$ of $\widehat{\mathbb{T}}^{\bf{b}, \mathfrak{l}}_{\Z}$ following \cite[Theorem~9.9]{BW18a}.

\subsection{}

We denote by $[\mc{O}^{\mathfrak{l}, \Delta}_{\mathbf{b}}]$ the Grothendieck group of the category $\mc{O}^{\mathfrak{l}, \Delta}_{\mathbf{b}}$. We have the following isomorphism of $\Z$-modules:
\begin{align*}
\Psi:[\mc{O}^{\mathfrak{l}, \Delta}_{\mathbf{b}}]&\longrightarrow   \mathbb{T}^{\bf{b}, \mathfrak{l}}_{\Z}\\
M^{\mathfrak{l}}_{\bf{b}}(\lambda)&\mapsto M^{{\bf b},{\mathfrak{l}}}_{f^{{\bf b}}_{\lambda}}(1), \quad \quad \text{ for } \lambda \in X_{\mathbf{b}}^{\mathfrak l,+}.
\end{align*}
We define $[[\mc{O}^{\mathfrak{l}, \Delta}_{\mathbf{b}}]]$ as the completion of $[\mc{O}^{\mathfrak{l}, \Delta}_{\mathbf{b}}]$ such that the extension of $\Psi$, 
\[
 \Psi: [[\mc{O}^{\mathfrak{l}, \Delta}_{\mathbf{b}}]] \longrightarrow \widehat{\mathbb{T}}_{\Z}^{\bf{b}, \mathfrak{l}}, 
\]
is an isomorphism of $\Z$-modules.

The following proposition is a reformulation of the Kazhdan-Lusztig theory for the parabolic category $\mc{O}$ of the Lie algebra $\mathfrak{so}(2m+1)$ (theorems of Brylinski-Kashiwara, Beilinson-Bernstein). 

\begin{prop}
Let ${\bf b} = (0^m)$ (that is $n=0$). The isomorphism $\Psi:  [[\mc{O}^{\mathfrak{l}, \Delta}_{\mathbf{b}}]] \longrightarrow {\mathbb{T}}_{\Z}^{\bf{b}, \mathfrak{l}}$ sends
\[
\Psi([L^{\mathfrak{l}}_{{\bf b}}(\lambda)]) = L^{{\bf b},{\mathfrak{l}}}_{f^{{\bf b}}_{\lambda}}(1), \quad \quad \quad \Psi([T^{\mathfrak{l}}_{{\bf b}}(\lambda)]) 
= T^{{\bf b},{\mathfrak{l}}}_{f^{{\bf b}}_{\lambda}}(1), \quad \quad \text{ for } \lambda \in X_{\mathbf{b}}^{\mathfrak l,+}.
\]
\end{prop}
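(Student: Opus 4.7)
The approach is to reduce the proposition to the classical Kazhdan-Lusztig theory for the parabolic category $\mc O$ of $\mathfrak{so}(2m+1)$ by matching the $\imath$-canonical and dual $\imath$-canonical bases on $\mathbb{T}^{{\bf b},\mathfrak{l}}_{\Z}$ with the parabolic Kazhdan-Lusztig bases of the Hecke module attached to the pair $(W_{B_m}, W_{\mathfrak{l}})$. Since $n = 0$ and ${\bf b} = (0^m)$, the Lie superalgebra collapses to $\mathfrak{so}(2m+1)$ and $\mc O^{\mathfrak{l},\Delta}_{\bf b}$ is the ordinary parabolic BGG category. There, the theorem of Beilinson--Bernstein and Brylinski--Kashiwara (in the parabolic form of Casian--Collingwood) together with Soergel's tilting-projective duality express $[L^{\mathfrak{l}}_{\bf b}(\lambda)]$ and $[T^{\mathfrak{l}}_{\bf b}(\lambda)]$ as $\Z$-linear combinations of $[M^{\mathfrak{l}}_{\bf b}(\mu)]$ whose coefficients are, up to sign, the specializations at $q=1$ of parabolic Kazhdan-Lusztig polynomials of type $B_m$ relative to $W_{\mathfrak{l}}$ and of their duals, respectively.

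On the quantum side, since all colors $c_i$ equal $0$, Corollary~\ref{cor:1} yields $\mathbb{T}^{\bf b}_{\mA} = \mathbb{V}_{\mA}^{\otimes m}$ carrying a commuting right action of $\mc{H}_{B_m}$ via the type-$B$ Schur-Weyl duality of \cite[\S 5]{BW13}. Under this duality, and under the identification of the relevant weight subspace of $\mathbb{V}^{\otimes m}$ with the regular representation of $\mc{H}_{B_m}$, the $\imath$-canonical basis of \cite[Theorem~5.8]{BW13} and its dual from \cite[Theorem~9.9]{BW13} coincide with the Kazhdan-Lusztig basis $\{\underline{H_w}\}$ of $\mc{H}_{B_m}$ and its dual. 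Since $\mathbb{T}^{{\bf b},\mathfrak{l}}_{\mA}$ is by definition the quotient of $\mathbb{V}_{\mA}^{\otimes m}$ by the right submodule generated by $\{\underline{H_w} : e \neq w \in W_{\mathfrak{l}}\}$, and since the analogous parabolic quotient on the Hecke side produces exactly the parabolic Kazhdan-Lusztig basis (Deodhar), the induced bases $\{T^{{\bf b},\mathfrak{l}}_f\}$ and $\{L^{{\bf b},\mathfrak{l}}_f\}$ are identified with the parabolic Kazhdan-Lusztig basis and its dual for $(W_{B_m}, W_{\mathfrak{l}})$.

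Combining the two computations and specializing at $q = 1$: the map $\Psi$ sends the standard class $[M^{\mathfrak{l}}_{\bf b}(\lambda)]$ to $M^{{\bf b},\mathfrak{l}}_{f^{\bf b}_\lambda}(1)$ by construction, so the classical parabolic-Verma expansions of $[L^{\mathfrak{l}}_{\bf b}(\lambda)]$ and $[T^{\mathfrak{l}}_{\bf b}(\lambda)]$ match, coefficient by coefficient, the expansions of $L^{{\bf b},\mathfrak{l}}_{f^{\bf b}_\lambda}(1)$ and $T^{{\bf b},\mathfrak{l}}_{f^{\bf b}_\lambda}(1)$ into the standard basis at $q = 1$. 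Both reduce to the same parabolic Kazhdan-Lusztig numbers, and the two claimed equalities follow.

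The main obstacle is the second step: verifying that the $\imath$-canonical basis on $\mathbb{V}^{\otimes m}$ is identified with the type-$B$ Kazhdan-Lusztig basis under Schur-Weyl duality, and that this identification descends cleanly to the parabolic quotient $\mathbb{T}^{{\bf b},\mathfrak{l}}$. In the regular (trivial-Levi) case the identification is essentially the content of \cite[Part~2]{BW13}; the parabolic descent is a standard compatibility of canonical and cellular bases with quotients by Kazhdan-Lusztig basis elements of a parabolic subgroup, and requires no new technology beyond what has already been developed.
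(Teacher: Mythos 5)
Your argument follows essentially the same route as the paper: identify the $\imath$-canonical basis on $\mathbb{T}^{\bf b}$ with the type-$B$ Kazhdan-Lusztig basis (via \cite[Theorem~5.8]{BW13}), pass to the parabolic quotient, and then invoke classical parabolic Kazhdan-Lusztig theory for $\mathfrak{so}(2m+1)$. The one place where you flag uncertainty --- whether the identification of bases descends ``cleanly'' to the quotient $\mathbb{T}^{{\bf b},\mathfrak{l}}$ --- is exactly where the paper supplies a specific justification: by \cite[Theorem~5.4]{BW13} the subspace $\mathbb S^{\mathfrak l} = \sum_{e\neq w\in W_{\mathfrak{l}}}\mathbb{T}^{\bf b}\cdot\underline{H_w}$ is not merely a right Hecke submodule but a $\Ui$-submodule, and in fact a based $\Ui$-submodule with respect to the Kazhdan-Lusztig basis. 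This is what makes the quotient a based $\Ui$-module whose $\imath$-canonical basis is the image basis, i.e.\ the parabolic Kazhdan-Lusztig basis; without the $\Ui$-stability of $\mathbb S^{\mathfrak l}$ the uniqueness in Theorem~\ref{thm:1}(1) would not let you conclude that the $\imath$-canonical basis of the quotient agrees with the image of that of $\mathbb{T}^{\bf b}$. So the ``standard compatibility'' you appeal to is real, but it rests on this particular result, which you should cite rather than leave as folklore. The extra references you bring in on the classical side (Casian--Collingwood, Soergel tilting-projective duality) are fine and a bit more explicit than the paper's terse ``classical Kazhdan-Lusztig theory,'' but do not constitute a different method.
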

Note $\widehat{\mathbb{T}}^{\bf{b}, \mathfrak{l}}_{\Z} = {\mathbb{T}}^{\bf{b}, \mathfrak{l}}_{\Z}$ in this case, i.e. no completion is needed.

\begin{proof}
Thanks to \cite[Theorem~5.8]{BW18a}, the $\imath$-canonical basis on $\mathbb{T}^{\bf{b}}$ can be identified with the Kazhdan-Lusztig basis (of type B) on  $\mathbb{T}^{\bf{b}}$. 
Note by \cite[Theorem~5.4]{BW18a} that $\mathbb S^{\mathfrak l} := \sum_{e\neq w \in W_{\mathfrak{l}}} \mathbb{T}^{\bf{b}} \cdot \underline{H_w}$ is a $\Ui$-submodule of  $\mathbb{T}^{\bf{b}}$, and it is actually a based $\Ui$-submodule of  $\mathbb{T}^{\bf{b}}$ with its Kazhdan-Lusztig basis. Therefore the $\imath$-canonical basis  on  $\mathbb{T}^{\bf{b}, \mathfrak{l}}_{\mA}$ in Theorem~\ref{thm:1} can be identified with the basis in the based quotient $\mathbb{T}^{\bf{b}} / \mathbb S^{\mathfrak l}$, which is exactly the parabolic Kazhdan-Lusztig basis. The proposition follows now from the classical Kazhdan-Lusztig theory (cf. \cite{BW18a}). 
\end{proof}

Now we can formulate the super Kazhdan-Lusztig theory for $\mc{O}^{\mathfrak{l}}_{\bf b}$. 
\begin{thm}
 \label{thm:2}
The isomorphism $\Psi:  [[\mc{O}^{\mathfrak{l}, \Delta}_{\mathbf{b}}]] \longrightarrow \widehat{\mathbb{T}}_{\Z}^{\bf{b}, \mathfrak{l}}$ sends
\[
\Psi([L^{\mathfrak{l}}_{{\bf b}}(\lambda)]) = L^{{\bf b},{\mathfrak{l}}}_{f^{{\bf b}}_{\lambda}}(1), \quad \quad \quad \Psi([T^{\mathfrak{l}}_{{\bf b}}(\lambda)]) 
= T^{{\bf b},{\mathfrak{l}}}_{f^{{\bf b}}_{\lambda}}(1), \quad \quad \text{ for } \lambda \in X_{\mathbf{b}}^{\mathfrak l,+}.
\]
\end{thm}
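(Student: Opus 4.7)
The plan is to adapt the proof of the super Kazhdan-Lusztig theorem for the full category $\mathcal{O}_{\bf b}$ from \cite[Part~2]{BW13} to the parabolic setting $\mathcal{O}^{\mathfrak{l}}_{\bf b}$, with Corollary~\ref{cor:1} (and hence Theorem~\ref{thm:1}) as the essential new combinatorial input: it supplies a based $\Ui$-module structure on the mixed tensor $\widehat{\mathbb{T}}^{{\bf b}, \mathfrak{l}}_{\mathbb{Z}}$ whose leftmost factor $\wedge^{a_0}\mathbb{V}_-$ is a genuine $\Ui$-module that is not a $\U$-module.

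First, I would observe that $\Psi$ carries parabolic Vermas to standard basis vectors by construction, so the theorem reduces to matching two expansions: the classes $[L^{\mathfrak{l}}_{\bf b}(\lambda)]$ and $[T^{\mathfrak{l}}_{\bf b}(\lambda)]$ in terms of $[M^{\mathfrak{l}}_{\bf b}(\mu)]$ on the category side, and the corresponding dual $\imath$-canonical and $\imath$-canonical basis expansions in terms of the standard basis on the combinatorial side. The base case ${\bf b}=(0^m)$ is supplied by the preceding Proposition via classical parabolic Kazhdan-Lusztig theory. To propagate to arbitrary ${\bf b}$, I would adapt the change-of-Borel / odd-reflection strategy of \cite[\S\S10--12]{BW13}: on the Lie-superalgebra side one compares parabolic simples and tiltings in $\mathcal{O}^{\mathfrak{l}}_{\bf b}$ and $\mathcal{O}^{\mathfrak{l}}_{{\bf b}'}$, where ${\bf b}'$ differs from ${\bf b}$ by swapping an adjacent pair not contributing to $\Pi_{\mathfrak{l}}$, via an odd-reflection translation principle; on the combinatorial side one compares the $\imath$-canonical bases in the two corresponding mixed tensors via the quantum $R$-matrix acting on the two adjacent type-A factors involved.

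The core inductive step is to verify that these two recursions agree. Here I would use Theorem~\ref{thm:1}(3)(4) to see that tensoring the based $\Ui$-module $\wedge^{a_0}\mathbb{V}_-$ with the based $\U$-module $\wedge^{a_1}\mathbb{V}^{c_1}\otimes\cdots\otimes\wedge^{a_k}\mathbb{V}^{c_k}$ yields a based $\Ui$-module compatible with the weight filtration, together with the uniqueness in Theorem~\ref{thm:1}(1) to pin down the basis; the integrality Lemma~\ref{lem:Z} ensures that all arguments pass to the $\mathcal{A}$-form and hence specialize at $q=1$. I would also verify that the quotient
\[
\mathbb{T}^{{\bf b}, \mathfrak{l}}_{\mathcal{A}} = \mathbb{T}^{\bf{b}}_{\mathcal{A}} \Big/ \sum_{e \neq w \in W_{\mathfrak{l}}} \mathbb{T}_{\mathcal{A}}^{\bf{b}} \cdot \underline{H_w}
\]
is a based quotient of $\Ui$-modules, as in the proof of the Proposition, and that the $B$-completion respects all these structures in parallel with \cite[\S9]{BW13}; on the categorical side the matching operation is the usual parabolic truncation / Zuckerman functor.

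The main obstacle I anticipate is the odd-reflection step: \cite[Part~2]{BW13} was tailored to the full category where the tensor product is a uniform mixed tensor of $\mathbb{V}$'s and $\mathbb{W}$'s, whereas here the leftmost factor $\wedge^{a_0}\mathbb{V}_-$ is of an essentially different nature, and one must verify that the $R$-matrix calculations remain valid when they take place to its right. Because this factor never participates in the transpositions used by the odd-reflection recursion and because Theorem~\ref{thm:1} ensures that the based $\Ui$-module structure is preserved uniformly as the remaining factors are rearranged, this obstacle should reduce to careful bookkeeping rather than requiring a fundamentally new idea.
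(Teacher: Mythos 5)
Your proposal takes a genuinely different route from the paper's. You propose to redo the odd-reflection / change-of-Borel induction of \cite[\S\S10--12]{BW13} directly in the parabolic setting, propagating from the base case $\mathbf{b}=(0^m)$ to arbitrary $\mathbf b$ by matching odd reflections on the category $\mathcal{O}^{\mathfrak l}_{\mathbf b}$ side with $R$-matrix recursions on the combinatorial side. The paper instead takes a shortcut: it invokes the already-proven super Kazhdan-Lusztig theorem for the \emph{full} category $\mathcal{O}_{\mathbf b}$ (that is, \cite[Theorem~11.13]{BW13}), places the parabolic situation in a commutative diagram
\[
\xymatrix{[[\mathcal{O}^{\mathfrak{l}, \Delta}_{\mathbf{b}}]]  \ar[r] \ar[d]&\widehat{\mathbb{T}}_{\mathbb{Z}}^{\mathbf{b}, \mathfrak{l}} \ar[d] \\
[[\mathcal{O}^{\Delta}_{\mathbf{b}}]] \ar[r]& \widehat{\mathbb{T}}_{\mathbb{Z}}^{\mathbf{b}}
}
\]
and then deduces the parabolic statement by a comparison of characters exactly as in \cite[\S11.2]{BW13}, using only \emph{classical} Kazhdan-Lusztig theory for the Levi $\mathfrak l$. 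In other words, the odd-reflection machinery is run once and for all at the level of the full category $\mathcal O$; the new contribution of this paper is precisely what you identified (Theorem~\ref{thm:1} producing the $\imath$-canonical and dual $\imath$-canonical bases on $\widehat{\mathbb{T}}^{\mathbf{b}, \mathfrak{l}}_{\mathbb Z}$), but it enters through this reduction rather than through a fresh induction. Your route trades a short deduction for a long one: you would have to re-establish the odd-reflection translation principle in the parabolic category $\mathcal{O}^{\mathfrak l}_{\mathbf b}$, and you would have to check that the sequence of odd reflections connecting two Borels can always be chosen to preserve the containment $\Pi_{\mathfrak l}\subset\Pi_{\mathbf b}$ so that $\mathfrak l$ stays a Levi throughout; you flag the $\wedge^{a_0}\mathbb V_-$ factor as the danger point, but this compatibility of the whole induction with the fixed Levi is the more serious issue, and it is exactly what the paper's detour through the full category $\mathcal O$ is designed to avoid. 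If you fill in those details carefully your argument should also work, but the paper's proof is considerably more economical and avoids introducing any new translation-functor technology in the parabolic setting.
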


\begin{proof}
Let us briefly explain the idea of the proof from \cite{BW18a}. The crucial new ingredient of this paper (cf. Remark~\ref{rem:Ui} below) is the existence of the $\imath$-canonical basis and dual $\imath$-canonical basis on $\widehat{\mathbb{T}}^{\bf{b}, \mathfrak{l}}$ thanks to Theorem~\ref{thm:1}. Here the dual $\imath$-canonical basis refers to a version of canonical basis where the lattice $\Z[q]$ is replaced by $\Z[q^{-1}]$; see \cite{BW18a}. 

We have already established the version of the theorem for the full category $\mc{O}$ of the Lie superalgebra $\mathfrak{osp}(2m+1 \vert 2n)$ in \cite[Theorem~11.13]{BW18a}. We have the following commutative diagram of $\Z$-modules:
\[
\xymatrix{[[\mc{O}^{\mathfrak{l}, \Delta}_{\mathbf{b}}]]  \ar[r] \ar[d]&\widehat{\mathbb{T}}_{\Z}^{\bf{b}, \mathfrak{l}} \ar[d] \\
[[\mc{O}^{\Delta}_{\mathbf{b}}]] \ar[r]& \widehat{\mathbb{T}}_{\Z}^{\bf{b}}
}
\]
(Note that the vertical arrow on the right is not a based embedding of $\Ui$-modules.) Then the theorem follows from comparison of characters entirely similar to \cite[\S11.2]{BW18a}. Note that this comparison uses only the classical Kazhdan-Lusztig theory. 
\end{proof}

\begin{rem}
  \label{rem:Ui}
In the case of the full category $\mc O$ (i.e., $\mathfrak l$ is the Cartan subalgebra), the theorem goes back to \cite[Theorem 11.13]{BW18a}. Following \cite[Remark 11.16]{BW18a}, the Kazhdan-Lusztig theory for the paraoblic category $\mc{O}^{\mathfrak{l}}_{\bf b}$ with  $\alpha_0 \neq \Pi_{\mathfrak l}$ was a direct consequence of \cite[Theorem 11.13]{BW18a}, via the $\imath$-canonical basis in \cite[Theorem~ 4.25]{BW18a} in the $\UA$-module $ \mathbb{T}^{\bf{b}, \mathfrak{l}}_{\mA}$ in \eqref{eq:T-} with $a_0=0$. 

When $a_0> 0$ (which corresponds to the condition $\alpha_0 \in \Pi_{\mathfrak l}$ on the Levi $\mathfrak l$), the space $ \mathbb{T}^{\bf{b}, \mathfrak{l}}_{\mA}$ in \eqref{eq:T-} is a ${}_\mA\Ui$-module but not a $\UA$-module, and hence Theorem~\ref{thm:1} is needed.
\end{rem}

Denote 
\[
T^{{\bf b}, \mathfrak{l}}_f = M^{{\bf b}, \mathfrak{l}}_f + \sum_{g} 
t^{\bf b}_{gf}(q) M^{{\bf b}, \mathfrak{l}}_g, \qquad \text{ for } t^{\bf b}_{gf}(q) \in \Z[q]. 
\]
By Theorem~\ref{thm:2}, $t^{\bf b}_{gf}(q)$ plays the role of Kazhdan-Lusztig polynomials  for $\mc{O}^{\mathfrak{l}}_{\bf b}$. The following positivity and finiteness results generalize \cite[Theorem 9.11]{BW18a} and follow by the same proof. 

\begin{thm} 
 \label{thm:positivity}
 {\quad}
\begin{enumerate}
\item
We have $t^{{\bf b}, \mathfrak{l}}_{gf}(q) \in \N [q]$.

\item  
The sum $T^{{\bf b}, \mathfrak{l}}_f = M_f^{{\bf b}, \mathfrak{l}} + \sum_{g}t^{\bf b}_{gf}(q)M^{{\bf b}, \mathfrak{l}}_g$ is finite, for all $f$.  
\end{enumerate}
\end{thm}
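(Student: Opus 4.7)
The plan is to deduce both parts from the corresponding positivity and finiteness on the unfolded tensor space $\mathbb{T}^{\bf b}$, which is \cite[Theorem~9.11]{BW13}, by realizing $\mathbb{T}^{\bf b, \mathfrak{l}}$ as a based $\Ui$-quotient of $\mathbb{T}^{\bf b}$. First I would recall, following the proof of the preceding Proposition, that $\mathbb{S}^{\mathfrak{l}} := \sum_{e \neq w \in W_{\mathfrak{l}}} \mathbb{T}^{\bf b} \cdot \underline{H_w}$ is a based $\Ui$-submodule of $\mathbb{T}^{\bf b}$ by \cite[Theorem~5.4]{BW13}, whose KL-type basis is exactly the subset of $\imath$-canonical basis elements $T^{\bf b}_h$ of $\mathbb{T}^{\bf b}$ indexed by $h \notin I^{m+n}_{\mathfrak{l},+}$. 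By the uniqueness clause in Theorem~\ref{thm:1}(1), the $\imath$-canonical basis on the quotient $\mathbb{T}^{\bf b, \mathfrak{l}}$ is then identified with the image of $\{T^{\bf b}_f \mid f \in I^{m+n}_{\mathfrak{l},+}\}$. When $a_0 > 0$, the submodule $\mathbb{S}^{\mathfrak{l}}$ is only a $\Ui$-submodule (not a $\U$-submodule), and it is precisely here that Theorem~\ref{thm:1} must be invoked in place of \cite[Theorem~4.25]{BW13}.

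Next I would push the expansion $T^{\bf b}_f = M^{\bf b}_f + \sum_h t^{\bf b}_{hf}(q)\, M^{\bf b}_h$ through the quotient map. Each standard monomial $M^{\bf b}_h$ maps either to zero or to $\pm q^{\ell(w)} M^{\bf b, \mathfrak{l}}_{\bar h}$, where $\bar h \in I^{m+n}_{\mathfrak{l},+}$ is the distinguished representative of the $W_{\mathfrak{l}}$-orbit of $h$ and $w \in W_{\mathfrak{l}}$ carries $\bar h$ to $h$. Collecting contributions orbit by orbit expresses $t^{\bf b, \mathfrak{l}}_{gf}(q)$ as a specific signed $\Z[q]$-combination of the $t^{\bf b}_{hf}(q)$, each of which lies in $\N[q]$ by \cite[Theorem~9.11(1)]{BW13}. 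Finiteness in (2) then follows immediately: \cite[Theorem~9.11(2)]{BW13} already gives finiteness on the $\mathbb{T}^{\bf b}$ side, and each orbit of $W_{\mathfrak{l}}$ is finite, so only finitely many $g$'s can receive a nonzero contribution.

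The main obstacle is the sign analysis in the previous step: one must verify that the signs $\pm q^{\ell(w)}$ arising from reducing $M^{\bf b}_h$ modulo $\mathbb{S}^{\mathfrak{l}}$ combine with the signs built into the wedge conventions on $\wedge^{a_0}\mathbb{V}_-$ and on each $\wedge^{a_i}\mathbb{V}^{c_i}$ so as to yield a coefficient in $\N[q]$ rather than merely in $\Z[q]$. For the type-$A$ wedges this is the computation already carried out in \cite[Theorem~9.11]{BW13}; for the type-$B$ wedge $\wedge^{a_0}\mathbb{V}_-$ (present when $a_0 > 0$) the same sign bookkeeping goes through inside the based $\Ui$-module furnished by Theorem~\ref{thm:1}, since the reduction relations produced by the $\underline{H_w}$ for $w \in W_{B_{a_0}}$ and the compatibility of $\ipsi$ with the quotient are the only ingredients used, and both are unaffected by the passage from $\U$-module to $\Ui$-module. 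This completes the reduction and yields both (1) and (2).
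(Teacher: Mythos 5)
The student's route — realizing $\mathbb{T}^{\bf b, \mathfrak{l}}$ as a based $\Ui$-quotient of $\mathbb{T}^{\bf b}$ and pushing the expansion of $T^{\bf b}_f$ through the quotient — is genuinely different from what the paper intends, which is simply to repeat the argument establishing \cite[Theorem~9.11]{BW13} directly in the parabolic setting. Your identification of the $\imath$-canonical basis on the quotient with the image of $\{T^{\bf b}_f \mid f \in I^{m+n}_{\mathfrak l,+}\}$, via the based-submodule $\mathbb{S}^{\mathfrak l}$ and the uniqueness clause in Theorem~\ref{thm:1}(1), is correct and cleanly proves the finiteness statement~(2): each $W_{\mathfrak l}$-orbit is finite, so the image of a finite sum on $\mathbb{T}^{\bf b}$ is a finite sum on $\mathbb{T}^{\bf b, \mathfrak{l}}$.

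The gap is in part~(1), and you have identified its location correctly but not closed it. Reducing a standard monomial $M^{\bf b}_h$ modulo $\mathbb{S}^{\mathfrak l}$ genuinely produces alternating signs: the defining relation $M\cdot\underline{H_s}=0$ in the wedge quotient forces $M^{\bf b}_{hs}$ to be a $(-q^{\pm1})$-multiple of $M^{\bf b}_{h}$, so collecting over a $W_{\mathfrak l}$-orbit expresses $t^{\bf b,\mathfrak l}_{gf}(q)$ as an alternating sum $\sum_{w}(-q)^{\pm\ell(w)}t^{\bf b}_{\bar g w, f}(q)$. Positivity of such a signed combination of nonnegative polynomials is exactly the content of parabolic Kazhdan--Lusztig positivity and is \emph{not} a formal consequence of positivity on $\mathbb{T}^{\bf b}$. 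Your closing assertion that ``the same sign bookkeeping goes through'' is precisely the nontrivial step, and no argument is offered for it; in particular the type-$B$ wedge $\wedge^{a_0}\mathbb{V}_-$ with $a_0>0$ is the new case of this paper, and asserting that it behaves like the type-$A$ cases treated in \cite{BW13} begs the question. To close the gap you would either need to argue that the only non-vanishing contributions come from dominant $h$, or invoke an independent parabolic positivity input. The paper instead sidesteps the pushdown altogether: it reruns the proof of \cite[Theorem~9.11]{BW13} in the enlarged setting, which does not pass through an alternating sum.
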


\begin{rem}
  \label{rem:halfZ}
To formulate a super Kazhdan-Lusztig theory for the parabolic category $\mc{O}$ consisting of modules of half-integer weights for $\mathfrak{osp}(2m+1\vert 2n)$, we use the quantum symmetric pair $(\U, \Ui)$ which is a direct limit of $(\U(\mathfrak{sl}_N), \Ui(\mathfrak{sl}_N) )$ for $N$ odd; cf. \cite[Sections 6, 12]{BW18a}. Theorem~\ref{thm:2} holds again in this setting. 
\end{rem}

\begin{rem}
Following \cite{Bao17}, a simple conceptual modification allows us to formulate a super (type D) Kazhdan-Lusztig theory for the parabolic category $\mc{O}$ consisting of modules of integer (respectively, half-integer) weights for $\mathfrak{osp}(2m\vert 2n)$. To that end, we use the $\imath$-canonical basis of the module \eqref{eq:T-} for the quantum symmetric pair $(\U, \Ui)$, where the parameter is now chosen to be $\kappa=0$ in the notation of \cite{BW18b}. Theorem~\ref{thm:2} holds again in this setting, where the cases new to this paper correspond to the cases $a_0 > 0$. 
\end{rem}



\begin{thebibliography}{ABC90}

\bibitem[Bao17]{Bao17}  H.~Bao, 
{\em Kazhdan-Lusztig theory of super type $D$ and quantum symmetric pairs}, 
Represent. Theory {\bf 21} (2017), 247--276, \href{http://arxiv.org/abs/1603.05105}{arXiv:1603.05105}.

\bi[Br03]{Br03} J.~Brundan, {\em Kazhdan-Lusztig polynomials and
character formulae for the Lie superalgebra $\mathfrak{gl}(m|n)$},
J.~Amer.~Math.~Soc.~{\bf 16} (2003), 185--231.


\bibitem[BK19]{BK19}
M.~Balagovic and S.~Kolb,
{\em Universal $K$-matrix for quantum symmetric pairs},  J. Reine Angew. Math. {\bf 747} (2019), 299--353, 
\href{http://arxiv.org/abs/1507.06276}{arXiv:1507.06276v2}.

\bibitem[BW18a]{BW18a}
H. Bao and W. Wang,
{\em A new approach to Kazhdan-Lusztig theory of type $B$ via quantum symmetric pairs}, Ast\'erisque {\bf 402}, 2018, vii+134pp, \href{http://arxiv.org/abs/1310.0103}{arXiv:1310.0103}.

\bibitem[BW18b]{BW18b}  
\bysame, 
{\em  Canonical bases arising from quantum symmetric pairs}, Invent. Math.  {\bf 213} (2018), 1099--1177, \href{http://arxiv.org/abs/1610.09271}{arXiv:1610.09271v2}. 


\bi[CLW11]{CLW11} S.-J. Cheng, N.~Lam and W.~Wang, {\em Super duality
and irreducible characters of ortho-symplectic Lie superalgebras},
Invent. Math.~{\bf 183} (2011), 189--224.

\bi[CLW15]{CLW15} S.-J. Cheng, N.~Lam and W.~Wang,  
{\em Brundan-Kazhdan-Lusztig conjecture for general linear Lie superalgebras}, Duke J. Math. {\bf 164} (2015), 617--695. 


%

\bi[Ko17]{Ko17}
S.~Kolb, 
{\em Braided module categories via quantum symmetric pairs}, arXiv:1705.04238v2.

\bibitem[Le99]{Le99}
G. Letzter, 
{\em Symmetric pairs for quantized enveloping algebras}, J. Algebra {\bf 220} (1999), 729--767.

\bi[Lu92]{Lu92} G. Lusztig, {\em Canonical bases in tensor products}, Proc. Nat. Acad. Sci. {\bf 89} (1992), 8177--8179. 

\bi[Lu94]{Lu94}  G. Lusztig, 
 {\em Introduction to Quantum Groups},
Modern Birkh\"auser Classics, Reprint of the 1993 Edition,
Birkh\"auser, Boston, 2010.

 \end{thebibliography}
\end{document}